\documentclass[12pt]{article}
\usepackage{amsfonts}
\usepackage{amsthm}
\usepackage{amsmath}
\usepackage{indentfirst}
\usepackage[top=1em,bottom=1em,left=6em,right=6em,includehead,includefoot]{geometry}
\begin{document}

\newtheorem{theorem}{Theorem}
\newtheorem{lemma}{Lemma}
\newtheorem{definition}{Definition}
\newtheorem{remark}{Remark}
\newtheorem{corollery}{Corollery}

\title{\bf Localization formulas about two Killing vector fields}
\author{Xu Chen \footnote{{\it Email:} xiaorenwu08@163.com.  ChongQing, China. }}
\date{}
\maketitle

\begin{abstract}
In this article, we will discuss the smooth
$(X_{M}+\sqrt{-1}Y_{M})$-invariant forms on $M$ and to establish a
localization formulas. As an application, we get a localization formulas for characteristic numbers.
\end{abstract}

The localization theorem for equivariant differential forms was
obtained by Berline and Vergne(see [2]). They discuss on the zero
points of a Killing vector field, the localization formula expresses the integral of an equivariantly closed differential form as an integral over the set of zeros of the Killing vector field. Now, We will discuss the equivariant cohomology about two Killing vector fields and to establish a localization formulas.

Let $M$ be a smooth closed oriented manifold. Let $G$ be
a compact Lie group acting smoothly on $M$, and let $\mathfrak{g}$
be its Lie algebra. Let $g^{TM}$ be a $G$-invariant metric on $TM$.
If $X,Y\in\mathfrak{g}$, let $X_{M} ,Y_{M}$ be the corresponding
smooth vector field on $M$. If $X,Y\in\mathfrak{g}$, then $X_{M}
,Y_{M}$ are Killing vector field. Here we will introduce the
equlvariant cohomology about two Killing vector fields.

\section{Equlvariant cohomology by two Killing vector fields}
   First, let us review the definition of equlvariant cohomology by a Killing vector field. Let
$\Omega^{*}(M)$ be the space of smooth differetial forms on $M$, the
de Rham complex is $(\Omega^{*}(M),d)$. Let $L_{X_{M}}$ be the Lie
derivative of $X_{M}$ on $\Omega^{*}(M)$, $i_{X_{M}}$ be the
interior multiplication induced by the contraction of $X_{M}$.\par
Set
$$d_{X}=d+i_{X_{M}},$$ then $d_{X}^{2}=L_{X_{M}}$ by the following
Cartan formula $$L_{X_{M}}=[d,i_{X_{M}}].$$\par

Let
$$\Omega_{X}^{*}(M)=\{\omega\in\Omega^{*}(M):L_{X_{M}}\omega=0\}$$
be the space of smooth $X_{M}$-invariant forms on $M$. Then
$d_{X}^{2}\omega=0$, when $\omega\in\Omega_{X}^{*}(M)$. It is a
complex $(\Omega_{X}^{*}(M),d_{X})$. The corresponding cohomology
group
$$H^{*}_{X}(M)=\frac{\rm{Ker}d_{X}|_{\Omega_{X}^{*}(M)}}{\rm{Im}d_{X}|_{\Omega_{X}^{*}(M)}}$$
is called the equivariant cohomology associated with $X$. If a form
$\omega$ has $d_{X}\omega=0$, then $\omega$ called $d_{X}$-closed
form.\par

Then we will to definite a new complex by two Killing vector field.
If $X,Y\in\mathfrak{g}$, let $X_{M} ,Y_{M}$ be the corresponding
smooth vector field on $M$.\par We know
$$L_{X_{M}}+\sqrt{-1}L_{Y_{M}}$$
be the operator on
$\Omega^{*}(M)\otimes_{\mathbb{R}}\mathbb{C}$.\par

Set
$$i_{X_{M}+\sqrt{-1}Y_{M}}\doteq i_{X_{M}}+\sqrt{-1}i_{Y_{M}}$$
be the interior multiplication induced by the contraction of
$X_{M}+\sqrt{-1}Y_{M}$. It is also a operator on
$\Omega^{*}(M)\otimes_{\mathbb{R}}\mathbb{C}$.\par

Set$$d_{X+\sqrt{-1}Y}=d+i_{X_{M}+\sqrt{-1}Y_{M}}.$$

\begin{lemma}
If $X,Y\in\mathfrak{g}$, let $X_{M} ,Y_{M}$ be the corresponding
smooth vector field on $M$; then
$$d_{X+\sqrt{-1}Y}^{2}=L_{X_{M}}+\sqrt{-1}L_{Y_{M}}$$
\end{lemma}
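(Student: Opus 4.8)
The plan is to expand the square of $d_{X+\sqrt{-1}Y}=d+i_{X_{M}}+\sqrt{-1}\,i_{Y_{M}}$ as an operator on $\Omega^{*}(M)\otimes_{\mathbb{R}}\mathbb{C}$ and to group the resulting terms into three families: the purely exterior term $d^{2}$, the mixed terms of the form $d\circ i+i\circ d$, and the purely interior term $(i_{X_{M}}+\sqrt{-1}\,i_{Y_{M}})^{2}$. First I would record the three elementary facts to be used: $d^{2}=0$; the Cartan homotopy formula $L_{Z_{M}}=d\,i_{Z_{M}}+i_{Z_{M}}d$ for a Killing (indeed any smooth) vector field $Z_{M}$, which is already quoted in the text for $X$; and the anticommutation relation $i_{X_{M}}i_{Y_{M}}+i_{Y_{M}}i_{X_{M}}=0$ for interior multiplications, with $i_{X_{M}}^{2}=i_{Y_{M}}^{2}=0$ as its special case.

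Then the computation runs as follows. Expanding,
\[
d_{X+\sqrt{-1}Y}^{2}=d^{2}+\bigl(d\,(i_{X_{M}}+\sqrt{-1}\,i_{Y_{M}})+(i_{X_{M}}+\sqrt{-1}\,i_{Y_{M}})\,d\bigr)+(i_{X_{M}}+\sqrt{-1}\,i_{Y_{M}})^{2}.
\]
The first term vanishes. In the middle term, $\mathbb{C}$-linearity lets me separate real and imaginary parts and apply Cartan's formula twice, giving $[d,i_{X_{M}}]+\sqrt{-1}\,[d,i_{Y_{M}}]=L_{X_{M}}+\sqrt{-1}\,L_{Y_{M}}$. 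For the last term I expand $(i_{X_{M}}+\sqrt{-1}\,i_{Y_{M}})^{2}=i_{X_{M}}^{2}+\sqrt{-1}\,(i_{X_{M}}i_{Y_{M}}+i_{Y_{M}}i_{X_{M}})+(\sqrt{-1})^{2}i_{Y_{M}}^{2}$, and each of the three summands vanishes by the anticommutation relation (using $(\sqrt{-1})^{2}=-1$ for the third), so the whole term is zero. Assembling the pieces yields $d_{X+\sqrt{-1}Y}^{2}=L_{X_{M}}+\sqrt{-1}\,L_{Y_{M}}$.

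Alternatively, and more conceptually, I could observe that $i_{X_{M}}+\sqrt{-1}\,i_{Y_{M}}$ is precisely the interior multiplication $i_{X_{M}+\sqrt{-1}Y_{M}}$ by the complexified vector field $X_{M}+\sqrt{-1}Y_{M}$, and that both Cartan's magic formula $d\,i_{V}+i_{V}\,d=L_{V}$ and the identity $i_{V}^{2}=0$ extend $\mathbb{C}$-bilinearly from real to complex vector fields; applying them with $V=X_{M}+\sqrt{-1}Y_{M}$ and noting $L_{X_{M}+\sqrt{-1}Y_{M}}=L_{X_{M}}+\sqrt{-1}\,L_{Y_{M}}$ gives the result at once.

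There is no serious obstacle here: the statement is essentially a bookkeeping identity. The only point requiring care is the sign in the interior-multiplication term — one must use that \emph{distinct} interior products anticommute rather than commute, so that the cross term $\sqrt{-1}(i_{X_{M}}i_{Y_{M}}+i_{Y_{M}}i_{X_{M}})$ drops out instead of doubling, and that $(\sqrt{-1})^{2}=-1$ correctly disposes of the $i_{Y_{M}}^{2}$ contribution.
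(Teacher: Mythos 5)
Your proposal is correct and follows essentially the same route as the paper: expand the square, kill $d^{2}$, apply Cartan's formula to the cross terms, and observe that $(i_{X_{M}}+\sqrt{-1}\,i_{Y_{M}})^{2}=0$. You merely spell out the anticommutation argument for the last step, which the paper leaves implicit.
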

\begin{proof}
\begin{align*}
(d+i_{X_{M}+\sqrt{-1}Y_{M}})^{2}
&=(d+i_{X_{M}}+\sqrt{-1}i_{Y_{M}})(d+i_{X_{M}}+\sqrt{-1}i_{Y_{M}})\\
&=d^{2}+di_{X_{M}}+i_{X_{M}}d+\sqrt{-1}di_{Y_{M}}+\sqrt{-1}i_{Y_{M}}d+(i_{X_{M}}+\sqrt{-1}i_{Y_{M}})^{2}\\
&=L_{X_{M}}+\sqrt{-1}L_{Y_{M}}
\end{align*}
\end{proof}
Let$$\Omega_{X_{M}+\sqrt{-1}Y_{M}}^{*}(M)=\{\omega\in\Omega^{*}(M)\otimes_{\mathbb{R}}\mathbb{C}:(L_{X_{M}}+\sqrt{-1}L_{Y_{M}})\omega=0\}$$
be the space of smooth $(X_{M}+\sqrt{-1}Y_{M})$-invariant forms on
$M$. Then we get a complex $(\Omega_{X_{M}+\sqrt{-1}Y_{M}}^{*}(M),
d_{X+\sqrt{-1}Y})$. We call a form $\omega$ is
$d_{X+\sqrt{-1}Y}$-closed if $d_{X+\sqrt{-1}Y}\omega=0$ (this is
first discussed by Bimsut, see [3]).The corresponding cohomology
group
$$H^{*}_{X+\sqrt{-1}Y}(M)=\frac{{\rm{Ker}}d_{X+\sqrt{-1}Y}|_{\Omega_{X+\sqrt{-1}Y}^{*}(M)}}{{\rm{Im}}d_{X+\sqrt{-1}Y}|_{\Omega_{X+\sqrt{-1}Y}^{*}(M)}}$$
is called the equivariant cohomology associated with $K$.

\section{The set of zero points}
\begin{lemma}
If $X,Y\in\mathfrak{g}$, let $X_{M} ,Y_{M}$ be the corresponding
smooth vector field on $M$, $X^{'}, Y^{'}$ be the 1-form on $M$
which is dual to $X_{M} ,Y_{M}$ by the metric $g^{TM}$, then
$$L_{X_{M}}Y^{'}+L_{Y_{M}}X^{'}=0.$$
If $[X_{M},Y_{M}]=0$, then $$L_{X_{M}}Y^{'}=L_{Y_{M}}X^{'}=0.$$
\end{lemma}
\begin{proof}
Because
$$(L_{X_{M}}\omega)(Z)=X_{M}(\omega(Z))-\omega([X_{M},Z])$$
here $Z\in\Gamma(TM)$, So we get
$$(L_{X_{M}}Y^{'})(Z)=X_{M}<Y_{M},Z>-<[X_{M},Z],Y_{M}>$$
$$(L_{Y_{M}}X^{'})(Z)=Y_{M}<X_{M},Z>-<[Y_{M},Z],X_{M}>.$$
Because $X_{M},Y_{M}$ are Killing vector fields, so (see [6])
\begin{align*}
X_{M}<Y_{M},Z>
&=<L_{X_{M}}Y_{M},Z>+<Y_{M},L_{X_{M}}Z>\\
&=<[X_{M},Y_{M}],Z>+<Y_{M},[X_{M},Z]>
\end{align*}
\begin{align*}
Y_{M}<X_{M},Z>
&=<L_{Y_{M}}X_{M},Z>+<X_{M},L_{Y_{M}}Z>\\
&=<[Y_{M},X_{M}],Z>+<X_{M},[Y_{M},Z]>
\end{align*}
then we get
$$(L_{X_{M}}Y^{'}+L_{Y_{M}}X^{'})(Z)=<[X_{M},Y_{M}],Z>+<[Y_{M},X_{M}],Z>=0.$$
If $[X_{M},Y_{M}]=0$, we have
$$(L_{X_{M}}Y^{'})(Z)=<[X_{M},Y_{M}],Z>+<Y_{M},[X_{M},Z]>-<[X_{M},Z],Y_{M}>=0.$$
$$(L_{Y_{M}}X^{'})(Z)=<[Y_{M},X_{M}],Z>+<X_{M},[Y_{M},Z]>-<[Y_{M},Z],X_{M}>=0.$$
\end{proof}

\begin{lemma}
If $X,Y\in\mathfrak{g}$, let $X_{M} ,Y_{M}$ be the corresponding
smooth vector field on $M$, $X^{'}, Y^{'}$ be the 1-form on $M$
which is dual to $X_{M} ,Y_{M}$ by the metric $g^{TM}$, then
$$d_{X+\sqrt{-1}Y}(X^{'}+\sqrt{-1}Y^{'})$$ is the
$d_{X+\sqrt{-1}Y}$-closed form.
\end{lemma}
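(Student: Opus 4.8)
The plan is to reduce everything to Lemma~1 together with the Killing property. By definition, to say that $\eta:=d_{X+\sqrt{-1}Y}(X^{'}+\sqrt{-1}Y^{'})$ is $d_{X+\sqrt{-1}Y}$-closed is to check two things: that $\eta$ lies in $\Omega_{X_{M}+\sqrt{-1}Y_{M}}^{*}(M)$, and that $d_{X+\sqrt{-1}Y}\eta=0$. For the second point, I would simply observe
$$d_{X+\sqrt{-1}Y}\eta=d_{X+\sqrt{-1}Y}^{2}(X^{'}+\sqrt{-1}Y^{'})=(L_{X_{M}}+\sqrt{-1}L_{Y_{M}})(X^{'}+\sqrt{-1}Y^{'}),$$
using Lemma~1. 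So the whole statement comes down to showing $(L_{X_{M}}+\sqrt{-1}L_{Y_{M}})(X^{'}+\sqrt{-1}Y^{'})=0$.

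Next I would expand this expression into real and imaginary parts:
$$(L_{X_{M}}+\sqrt{-1}L_{Y_{M}})(X^{'}+\sqrt{-1}Y^{'})=\big(L_{X_{M}}X^{'}-L_{Y_{M}}Y^{'}\big)+\sqrt{-1}\big(L_{X_{M}}Y^{'}+L_{Y_{M}}X^{'}\big).$$
The imaginary part vanishes immediately by Lemma~2. For the real part, the key observation is that a Killing vector field annihilates its own dual $1$-form: since $L_{X_{M}}g^{TM}=0$ and $L_{X_{M}}X_{M}=[X_{M},X_{M}]=0$, we get $L_{X_{M}}X^{'}=(L_{X_{M}}g^{TM})(X_{M},\cdot)+g^{TM}(L_{X_{M}}X_{M},\cdot)=0$, and likewise $L_{Y_{M}}Y^{'}=0$. (Alternatively this follows from the same computation as in the proof of Lemma~2, evaluating $(L_{X_{M}}X^{'})(Z)=X_{M}\langle X_{M},Z\rangle-\langle X_{M},[X_{M},Z]\rangle$ and using that $X_{M}$ is Killing.) Hence the real part is $0$ as well, so $(L_{X_{M}}+\sqrt{-1}L_{Y_{M}})(X^{'}+\sqrt{-1}Y^{'})=0$.

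Finally I would note that this last identity also settles membership of $\eta$ in the invariant subcomplex: since $L_{X_{M}}+\sqrt{-1}L_{Y_{M}}=d_{X+\sqrt{-1}Y}^{2}$ commutes with $d_{X+\sqrt{-1}Y}$, we have $(L_{X_{M}}+\sqrt{-1}L_{Y_{M}})\eta=d_{X+\sqrt{-1}Y}\big((L_{X_{M}}+\sqrt{-1}L_{Y_{M}})(X^{'}+\sqrt{-1}Y^{'})\big)=0$, so $\eta\in\Omega_{X_{M}+\sqrt{-1}Y_{M}}^{*}(M)$. Combining the two computations gives $d_{X+\sqrt{-1}Y}\eta=0$ with $\eta$ in the complex, which is exactly the assertion. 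There is no real obstacle here: the only point requiring a small argument is the vanishing $L_{X_{M}}X^{'}=L_{Y_{M}}Y^{'}=0$, which is the natural companion of Lemma~2 and uses nothing beyond the Killing condition; everything else is a formal consequence of Lemma~1.
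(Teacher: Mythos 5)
Your proposal is correct and follows essentially the same route as the paper: apply Lemma~1 to reduce the claim to $(L_{X_{M}}+\sqrt{-1}L_{Y_{M}})(X^{'}+\sqrt{-1}Y^{'})=0$, kill the imaginary part $L_{X_{M}}Y^{'}+L_{Y_{M}}X^{'}$ by Lemma~2, and kill the real part $L_{X_{M}}X^{'}-L_{Y_{M}}Y^{'}$ using that a Killing field annihilates its own dual one-form. You actually supply a justification for this last vanishing that the paper leaves implicit, and your extra check that the form lies in the invariant subcomplex is a harmless addition.
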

\begin{proof}
\begin{align*}
d_{X+\sqrt{-1}Y}^{2}(X^{'}+\sqrt{-1}Y^{'})
&=d_{X+\sqrt{-1}Y}(d(X^{'}+\sqrt{-1}Y^{'})+i_{X_{M}+\sqrt{-1}Y_{M}}(X^{'}+\sqrt{-1}Y^{'}))\\
&=di_{X_{M}+\sqrt{-1}Y_{M}}(X^{'}+\sqrt{-1}Y^{'})+i_{X_{M}+\sqrt{-1}Y_{M}}d(X^{'}+\sqrt{-1}Y^{'})\\
&=L_{X_{M}}X^{'}-L_{Y_{M}}Y^{'}+\sqrt{-1}(L_{X_{M}}Y^{'}+L_{Y_{M}}X^{'})\\
&=0
\end{align*}
So $d_{X+\sqrt{-1}Y}(X^{'}+\sqrt{-1}Y^{'})$ is the
$d_{X+\sqrt{-1}Y}$-closed form.
\end{proof}

\begin{lemma}
For any $\eta\in H^{*}_{X+\sqrt{-1}Y}(M)$ and $s\geq 0$, we have
$$\int_{M}\eta=\int_{M}\exp\{-s(d_{X+\sqrt{-1}Y}(X^{'}+\sqrt{-1}Y^{'}))\}\eta$$
\end{lemma}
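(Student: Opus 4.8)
The plan is to show that the integrand does not depend on the parameter $s$ by differentiating under the integral sign, and then to invoke Stokes' theorem together with the fact that $d_{X+\sqrt{-1}Y}$ integrates to zero on a closed manifold. First I would abbreviate $D = d_{X+\sqrt{-1}Y}$ and $\alpha = X^{'}+\sqrt{-1}Y^{'}$, so that the claim reads $\int_M \eta = \int_M e^{-s\,D\alpha}\,\eta$. By Lemma 3, $D\alpha$ is $D$-closed, and $\eta$ is $D$-closed by hypothesis (being a class in $H^{*}_{X+\sqrt{-1}Y}(M)$); hence the product $e^{-s D\alpha}\eta$ is $D$-closed, since $D$ is an odd derivation and $D(e^{-sD\alpha}) = -s (D^2\alpha)\, e^{-sD\alpha} = 0$ (using $D^2\alpha = 0$ and that $D\alpha$ is an even-degree form so it commutes past everything). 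Note also that $e^{-sD\alpha}\eta$ lies in $\Omega^{*}_{X_M+\sqrt{-1}Y_M}(M)$ because $(L_{X_M}+\sqrt{-1}L_{Y_M}) = D^2$ annihilates both factors and is a derivation.

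Next I would compute $\frac{d}{ds}\int_M e^{-sD\alpha}\eta = \int_M (-D\alpha)\, e^{-sD\alpha}\eta = -\int_M D\!\left(\alpha\, e^{-sD\alpha}\eta\right)$, where the last equality again uses that $D$ is an odd derivation and that $e^{-sD\alpha}\eta$ is $D$-closed, so all the terms where $D$ hits the exponential or $\eta$ vanish, leaving exactly $(D\alpha) e^{-sD\alpha}\eta$. Now $D\!\left(\alpha\, e^{-sD\alpha}\eta\right) = d\!\left(\alpha\, e^{-sD\alpha}\eta\right) + i_{X_M+\sqrt{-1}Y_M}\!\left(\alpha\, e^{-sD\alpha}\eta\right)$. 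The integral over $M$ of the $d(\cdots)$ piece vanishes by Stokes' theorem since $M$ is closed and oriented; the integral of the interior-multiplication piece vanishes because contraction lowers form degree, so it cannot contribute to the top-degree component that is being integrated. Hence $\frac{d}{ds}\int_M e^{-sD\alpha}\eta = 0$ for all $s \ge 0$, so the integral is constant in $s$; evaluating at $s=0$ gives $\int_M e^{-sD\alpha}\eta = \int_M \eta$.

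The step I expect to require the most care is the justification that $\frac{d}{ds}\int_M e^{-sD\alpha}\eta$ equals $-\int_M D(\alpha\, e^{-sD\alpha}\eta)$ — that is, the algebraic bookkeeping that $(D\alpha)\,e^{-sD\alpha}\eta = D(\alpha\,e^{-sD\alpha}\eta)$. This rests on three facts that must be combined correctly: that $D$ is an odd derivation on $\Omega^{*}(M)\otimes_{\mathbb R}\mathbb C$ (the sum of the derivation $d$ and the odd derivation $i_{X_M+\sqrt{-1}Y_M}$), that $D\alpha$ has even total degree so sign issues in the Leibniz rule are benign, and that $D(e^{-sD\alpha}\eta) = 0$, which itself uses $D^2\alpha = 0$ from Lemma 1 and Lemma 3. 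Everything else — Stokes on a closed manifold and the degree count killing the contraction term — is routine.
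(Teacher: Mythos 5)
Your proposal is correct and follows essentially the same route as the paper: differentiate $\int_M e^{-sD\alpha}\eta$ in $s$, use the $D$-closedness of $\eta$ and of $e^{-sD\alpha}$ (the latter via Lemma 3) to rewrite the integrand as the exact form $D(\alpha\, e^{-sD\alpha}\eta)$, and kill the integral by Stokes. Your added remark that the $i_{X_M+\sqrt{-1}Y_M}$ part of $D$ contributes nothing to the integral for degree reasons is a detail the paper leaves implicit, but the argument is the same.
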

\begin{proof}
Because $$\frac{\partial}{\partial
s}\int_{M}\exp\{-s(d_{X+\sqrt{-1}Y}(X^{'}+\sqrt{-1}Y^{'}))\}\eta$$
$$=-\int_{M}(d_{X+\sqrt{-1}Y}(X^{'}+\sqrt{-1}Y^{'}))\exp\{-s(d_{X+\sqrt{-1}Y}(X^{'}+\sqrt{-1}Y^{'}))\}\eta$$
and by assumption we have
$$d_{X+\sqrt{-1}Y}\eta=0$$
$$d_{X+\sqrt{-1}Y}\exp\{-s(d_{X+\sqrt{-1}Y}(X^{'}+\sqrt{-1}Y^{'}))\}=0$$
So we get
$$(d_{X+\sqrt{-1}Y}(X^{'}+\sqrt{-1}Y^{'}))\exp\{-s(d_{X+\sqrt{-1}Y}(X^{'}+\sqrt{-1}Y^{'}))\}\eta$$
$$=d_{X+\sqrt{-1}Y}[(X^{'}+\sqrt{-1}Y^{'})\exp\{-s(d_{X+\sqrt{-1}Y}(X^{'}+\sqrt{-1}Y^{'}))\}\eta]$$
and by Stokes formula we have
$$\frac{\partial}{\partial
s}\int_{M}\exp\{-s(d_{X+\sqrt{-1}Y}(X^{'}+\sqrt{-1}Y^{'}))\}\eta=0$$
Then we get
$$\int_{M}\eta=\int_{M}\exp\{-s(d_{X+\sqrt{-1}Y}(X^{'}+\sqrt{-1}Y^{'}))\}\eta$$
\end{proof}

We have
$$d_{X+\sqrt{-1}Y}(X^{'}+\sqrt{-1}Y^{'})
=d(X^{'}+\sqrt{-1}Y^{'})+\langle X_{M}+\sqrt{-1}Y_{M},
X_{M}+\sqrt{-1}Y_{M}\rangle$$ and $$\langle X_{M}+\sqrt{-1}Y_{M},
X_{M}+\sqrt{-1}Y_{M}\rangle=|X_{M}|^{2}-|Y_{M}|^{2}+2\sqrt{-1}\langle
X_{M}, Y_{M}\rangle$$

Set
$$M_{0}=\{x\in M \mid \langle X_{M}(x)+\sqrt{-1}Y_{M}(x),
X_{M}(x)+\sqrt{-1}Y_{M}(x)\rangle=0\}.$$
For simplicity, we assume that $M_{0}$ is the
connected submanifold of $M$, $\dim M_{0}<\dim M$, and $\mathcal{N}$ is the normal bundle
of $M_{0}$ about $M$. The set $M_{0}$ is first discussed by
H.Jacobowitz (see [4]).

\begin{lemma}
For any $\eta\in H^{*}_{X+\sqrt{-1}Y}(M)$, the top form of $\eta$ is
exact outside $M_{0}$.
\end{lemma}
\begin{proof}
By lemma 3., we have on $M\backslash M_{0}$
$$\eta=d_{X+\sqrt{-1}Y}\left(\frac{(X^{'}+\sqrt{-1}Y^{'})\wedge\eta}{d_{X+\sqrt{-1}Y}(X^{'}+\sqrt{-1}Y^{'})}\right).$$
If $\eta_{[n]}$ is the top form of $\eta$, then we get
$$\eta_{[n]}=d\left(\frac{(X^{'}+\sqrt{-1}Y^{'})\wedge\eta}{d_{X+\sqrt{-1}Y}(X^{'}+\sqrt{-1}Y^{'})}\right)_{[n-1]}$$
on $M\backslash M_{0}$.
\end{proof}

\begin{corollery}
For any $\eta\in H^{*}_{X+\sqrt{-1}Y}(M)$, if $M_{0}=\emptyset$, then $\int_{M}\eta=0$.
\end{corollery}
\begin{proof}
By the Stokes formula and Lemma 5., we get the result.
\end{proof}

\section{Localization formula on $d_{X+\sqrt{-1}Y}$-closed form}
Set $E$ is a G-equivariant vector bundle, if $\nabla^{E}$ is a
connection on $E$ which commutes with the action of $G$ on
$\Omega(M,E)$, we see that $$[\nabla^{E}, L^{E}_{X}]=0$$ for all
$X\in\mathfrak{g}$. Then we can get a moment map by
$$\mu^{E}(X)=L^{E}_{X}-[\nabla^{E},i_{X}]=L^{E}_{X}-\nabla^{E}_{X}$$
We known that if $y$ be the tautological section of the bundle
$\pi^{*}E$ over E, then the vertical component of $X_{E}$ may be
identified with $-\mu^{E}(X)y$(see [1] proposition 7.6). For the normal bundle $\mathcal{N}$ of $M_0$, the
vector field $X^{\mathcal{N}}$ and $Y^{\mathcal{N}}$ are vertical
and are given at the point $(x,y)\in M_{0}\times\mathcal{N}_{x}$ by
the vectors $-\mu^{\mathcal{N}}(X)y,
-\mu^{\mathcal{N}}(Y)y\in\mathcal{N}_{x}$.\par

If $E$ is the tangent bundle $TM$ and $\nabla^{TM}$ is Levi-Civita
connection, then we have
$$\mu^{TM}(X)Y=L_{X}Y-\nabla^{TM}_{X}Y=-\nabla^{TM}_{Y}X$$
We known that for any Killing vector field $X$, $\mu^{TM}(X)$ as
linear endomorphisms of $TM$ is skew-symmetric, $-\mu^{TM}(X)$
annihilates the tangent bundle $TM_{0}$ and induces a skew-symmetric
automorphism of the normal bundle $\mathcal{N}$(see [5] chapter II,
proposition 2.2 and theorem 5.3). The restriction of $\mu^{TM}(X)$
to $\mathcal{N}$ coincides with the moment endomorphism
$\mu^{\mathcal{N}}(X)$. 

Now we construct a one-form $\alpha$ on $\mathcal{N}$:
$$Z\in \Gamma(T\mathcal{N})\rightarrow\alpha(Z)=\langle-\mu^{\mathcal{N}}(X)y,\nabla^{\mathcal{N}}_{Z}y\rangle+\sqrt{-1}\langle-\mu^{\mathcal{N}}(Y)y,\nabla^{\mathcal{N}}_{Z}y\rangle$$

Let $Z_{1},Z_{2}\in\Gamma(T\mathcal{N})$, we known
$d\alpha(Z_{1},Z_{2})=Z_{1}\alpha(Z_{2})-Z_{2}\alpha(Z_{1})-\alpha([Z_{1},Z_{2}])$,
so
\begin{align*}
d\alpha(Z_{1},Z_{2})
&=\langle-\nabla^{\mathcal{N}}_{Z_{1}}\mu^{\mathcal{N}}(X)y,\nabla^{\mathcal{N}}_{Z_{2}}y\rangle-\langle-\nabla^{\mathcal{N}}_{Z_{2}}\mu^{\mathcal{N}}(X)y,\nabla^{\mathcal{N}}_{Z_{1}}y\rangle\\
&+\sqrt{-1}\langle-\nabla^{\mathcal{N}}_{Z_{1}}\mu^{\mathcal{N}}(Y)y,\nabla^{\mathcal{N}}_{Z_{2}}y\rangle-\sqrt{-1}\langle-\nabla^{\mathcal{N}}_{Z_{2}}\mu^{\mathcal{N}}(Y)y,\nabla^{\mathcal{N}}_{Z_{1}}y\rangle\\
&+\langle-\mu^{\mathcal{N}}(X)y,R^{\mathcal{N}}(Z_{1},Z_{2})y\rangle+\sqrt{-1}\langle-\mu^{\mathcal{N}}(Y)y,R^{\mathcal{N}}(Z_{1},Z_{2})y\rangle\\
\end{align*}
Recall that $\nabla^{\mathcal{N}}$ is invariant under $L_{X}$ for
all $X\in\mathfrak{g}$, so that
$[\nabla^{\mathcal{N}},\mu^{\mathcal{N}}(X)]=0$,
$[\nabla^{\mathcal{N}},\mu^{\mathcal{N}}(Y)]=0$. And by $X,Y$ are
Killing vector field, we have $d\alpha$ equals
$$2\langle-(\mu^{\mathcal{N}}(X)+\sqrt{-1}\mu^{\mathcal{N}}(Y))
\cdot,\cdot\rangle+\langle-\mu^{\mathcal{N}}(X)y-\sqrt{-1}\mu^{\mathcal{N}}(Y)y,R^{\mathcal{N}}y\rangle$$
And by
$|X_{\mathcal{N}}|^{2}=\langle\mu^{\mathcal{N}}(X)y,\mu^{\mathcal{N}}(X)y\rangle$,
$|Y_{\mathcal{N}}|^{2}=\langle\mu^{\mathcal{N}}(Y)y,\mu^{\mathcal{N}}(Y)y\rangle$.
So We can get
\begin{align*}
d_{X_{\mathcal{N}}+\sqrt{-1}Y_{\mathcal{N}}}(X^{'}_{\mathcal{N}}+\sqrt{-1}Y^{'}_{\mathcal{N}})
&=-2\langle(\mu^{\mathcal{N}}(X)+\sqrt{-1}\mu^{\mathcal{N}}(Y))\cdot,\cdot\rangle\\
&+\langle-\mu^{\mathcal{N}}(X)y-\sqrt{-1}\mu^{\mathcal{N}}(Y)y,-\mu^{\mathcal{N}}(X)y-\sqrt{-1}\mu^{\mathcal{N}}(Y)y+R^{\mathcal{N}}y\rangle
\end{align*}

\begin{theorem}
Let $M$ be a smooth closed oriented manifold, $G$ be a compact Lie group
acting smoothly on $M$. For any $\eta\in H^{*}_{X+\sqrt{-1}Y}(M)$, $[X_{M},Y_{M}]=0$, the following identity
hold:
$$\int_{M}\eta=\int_{M_{0}} \frac{\eta}{\rm{Pf}[\frac{-\mu^{\mathcal{N}}(X)-\sqrt{-1}\mu^{\mathcal{N}}(Y)+R^{\mathcal{N}}}{2\pi}]}$$
\end{theorem}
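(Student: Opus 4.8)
I would follow the localization argument of Berline--Vergne [2], with the single Killing field replaced by the complex combination $X_{M}+\sqrt{-1}Y_{M}$ and using the material already assembled above. Write $D:=d_{X+\sqrt{-1}Y}$, $\theta:=X'+\sqrt{-1}Y'$, $f:=\langle X_{M}+\sqrt{-1}Y_{M},X_{M}+\sqrt{-1}Y_{M}\rangle=|X_{M}|^{2}-|Y_{M}|^{2}+2\sqrt{-1}\langle X_{M},Y_{M}\rangle$ and $\gamma:=D\theta=d\theta+f$, so that $M_{0}=\{f=0\}$. By the Killing property $L_{X_{M}}X'=L_{Y_{M}}Y'=0$, and $L_{X_{M}}Y'+L_{Y_{M}}X'=0$ by Lemma 2, hence $(L_{X_{M}}+\sqrt{-1}L_{Y_{M}})\theta=L_{X_{M}}X'-L_{Y_{M}}Y'+\sqrt{-1}(L_{X_{M}}Y'+L_{Y_{M}}X')=0$; so $\theta\in\Omega_{X_{M}+\sqrt{-1}Y_{M}}^{*}(M)$, $D^{2}\theta=0$, and $\gamma$ is $D$-closed, in agreement with Lemma 3.

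The first step is to localize the integral to a tubular neighborhood of $M_{0}$. On $M\setminus M_{0}$ the complex function $f$ is nowhere zero, so $\gamma=f(1+f^{-1}d\theta)$ is invertible there, with $\gamma^{-1}=\sum_{j\geq 0}(-1)^{j}f^{-j-1}(d\theta)^{j}$ a finite sum. Put $\beta:=\theta\wedge\eta\wedge\gamma^{-1}$ on $M\setminus M_{0}$; using $D\eta=0$, $D\gamma=0$ (hence $D\gamma^{-1}=0$) and $D\theta=\gamma$, together with the fact that $\gamma^{\pm 1}$ are even forms, one gets $D\beta=\gamma\wedge\eta\wedge\gamma^{-1}=\eta$ on $M\setminus M_{0}$. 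Choosing a decreasing family $U_{\varepsilon}\downarrow M_{0}$ of $G_{0}$-invariant tubular neighborhoods and noting that $i_{X_{M}+\sqrt{-1}Y_{M}}\beta$ contributes nothing in top degree, Stokes' formula gives
$$\int_{M}\eta=\int_{U_{\varepsilon}}\eta+\int_{M\setminus U_{\varepsilon}}D\beta=\int_{U_{\varepsilon}}\eta+\int_{\partial U_{\varepsilon}}\beta$$
with $\partial U_{\varepsilon}$ suitably oriented; letting $\varepsilon\to 0$ the first term tends to $0$, so $\int_{M}\eta=\lim_{\varepsilon\to0}\int_{\partial U_{\varepsilon}}\beta$. (Equivalently one inserts $e^{-s\gamma}$ via Lemma 4 and lets $s\to\infty$; what matters is that $f$ vanishes precisely on $M_{0}$, so the complement contributes nothing in the limit.)

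The second step is to compute this residue on the normal bundle. Because $\exp(-tX),\exp(-tY)\in G_{0}$, the fields $X_{M},Y_{M}$ are tangent to $M_{0}$ and $G_{0}$ acts on $\mathcal{N}$; a $G_{0}$-invariant exponential map identifies $U_{\varepsilon}$ with an $\varepsilon$-disc bundle in $\mathcal{N}$, and modulo forms pulled back from $M_{0}$ the germ of $\gamma$ along $M_{0}$ is the model form $d_{X_{\mathcal{N}}+\sqrt{-1}Y_{\mathcal{N}}}(X'_{\mathcal{N}}+\sqrt{-1}Y'_{\mathcal{N}})$ computed just before the statement. Writing $A:=\mu^{\mathcal{N}}(X)+\sqrt{-1}\mu^{\mathcal{N}}(Y)$, the transverse behaviour $f\circ\exp_{x}(ty)=t^{2}\langle Ay,Ay\rangle+O(t^{3})$ for $y\in\mathcal{N}_{x}$ shows that $M_{0}=\{f=0\}$ forces $\langle Ay,Ay\rangle\neq0$ for $y\neq0$, i.e.\ $A$ is fibrewise invertible; and $[X_{M},Y_{M}]=0$ makes $\mu^{\mathcal{N}}(X)$ and $\mu^{\mathcal{N}}(Y)$ commute modulo curvature, which is what lets the fibre integral factor through a single Pfaffian. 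Carrying out the Berline--Vergne fibre integration over $\mathcal{N}$ — the quadratic term $-2\langle A\,\cdot,\cdot\rangle$ supplying the Gaussian normalization and $R^{\mathcal{N}}$ entering multiplicatively — yields $\mathrm{Pf}\big[\tfrac{-\mu^{\mathcal{N}}(X)-\sqrt{-1}\mu^{\mathcal{N}}(Y)+R^{\mathcal{N}}}{2\pi}\big]^{-1}$ for the fibre integral, and hence $\int_{M}\eta=\int_{M_{0}}\eta\big/\mathrm{Pf}\big[\tfrac{-\mu^{\mathcal{N}}(X)-\sqrt{-1}\mu^{\mathcal{N}}(Y)+R^{\mathcal{N}}}{2\pi}\big]$.

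The main obstacle is the second step. Since $-2\langle A\,\cdot,\cdot\rangle$ is a complex, not positive-definite, quadratic form, the fibre integral cannot be evaluated as a literal Gaussian integral and must be extracted from the rational/residue structure of $\gamma^{-1}$; one has to verify that the fibrewise invertibility of $A$ forced by the definition of $M_{0}$ is exactly what keeps every denominator nonzero, and then track the orientation of $\mathcal{N}$ and the normalization constants so that the result emerges as the reciprocal Pfaffian with the stated factor $2\pi$. The first step, by contrast, is essentially formal once Lemmas 2--4 are available.
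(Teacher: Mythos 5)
Your overall strategy coincides with the paper's: localize $\int_{M}\eta$ to a tubular neighborhood of $M_{0}$ using the $D$-closed form $\gamma=D(X'+\sqrt{-1}Y')$, then evaluate the normal-bundle contribution by a fibrewise Gaussian-type integral whose quadratic part is governed by $A=\mu^{\mathcal{N}}(X)+\sqrt{-1}\mu^{\mathcal{N}}(Y)$ and whose curvature correction produces the Pfaffian. The differences are in execution. For the first step the paper takes the route you mention only parenthetically: it sets $s=1/2t$ in Lemma 4 and asserts that as $t\to 0$ the integral of $e^{-\gamma/2t}\eta$ concentrates near $M_{0}$ because $f=\langle X_{M}+\sqrt{-1}Y_{M},X_{M}+\sqrt{-1}Y_{M}\rangle\neq 0$ off $M_{0}$. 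Your $\beta=\theta\wedge\eta\wedge\gamma^{-1}$ Stokes argument is the classical Berline--Vergne alternative and is in fact the more robust of the two here: since $|e^{-f/2t}|=e^{-(|X_{M}|^{2}-|Y_{M}|^{2})/2t}$ need not decay (and can blow up) where $|Y_{M}|>|X_{M}|$, the nonvanishing of the complex function $f$ does not by itself justify the paper's concentration claim, whereas invertibility of $\gamma$ off $M_{0}$ is purely algebraic.

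For the second step the paper writes out the computation you only sketch: it replaces $\gamma$ by the model form on $\mathcal{N}$ displayed before the theorem, rescales $y\mapsto\sqrt{t}\,y$, extracts the volume form via $\frac{1}{n!}\langle A\cdot,\cdot\rangle^{n}=\mathrm{Pf}(A)\,dy$, and evaluates $\int e^{-\frac{1}{2}\langle Ay,(A-R^{\mathcal{N}})y\rangle}dy=(2\pi)^{n}\det(A)^{-1/2}\det(-A+R^{\mathcal{N}})^{-1/2}$ formally, so that the two factors $\det(A)^{\pm 1/2}$ cancel and only $\mathrm{Pf}[(-A+R^{\mathcal{N}})/2\pi]^{-1}$ survives; the hypothesis $[X_{M},Y_{M}]=0$ enters exactly where you say it does, to make $\mu^{\mathcal{N}}(X)$ and $\mu^{\mathcal{N}}(Y)$ commute. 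The obstacle you flag --- that $\langle A\cdot,\cdot\rangle$ is complex and not positive definite, so this is not a convergent Gaussian integral --- is genuine, and the paper does not address it either: it simply computes as if the real Gaussian formula held. So your proposal introduces no new gap relative to the paper, but it also leaves open precisely the point you identify; to complete either argument one must justify the fibre integral by analytic continuation in $A$ (or by the algebraic residue expansion of $\gamma^{-1}$ you allude to) and check that the fibrewise invertibility of $A$ forced by the definition of $M_{0}$ keeps $\mathrm{Pf}(-A+R^{\mathcal{N}})$ invertible. As a strategy your write-up is faithful to the paper; as a proof, its second step remains a sketch.
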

\begin{proof}
Set $s=\frac{1}{2t}$, so by Lemma 4. we get
$$\int_{M}\eta=\int_{M}\exp\{-\frac{1}{2t}(d_{X+\sqrt{-1}Y}(X^{'}+\sqrt{-1}Y^{'}))\}\eta$$
Let $V$ is a neighborhood of $M_{0}$ in $\mathcal{N}$. We identify a
tubular neighborhood of $M_{0}$ in $M$ with $V$. Set $V^{'}\subset
V$. When $t\rightarrow 0$, because $\langle
X_{M}(x)+\sqrt{-1}Y_{M}(x), X_{M}(x)+\sqrt{-1}Y_{M}(x)\rangle\neq0$
out of $M_{0}$, so we have
$$\int_{M}\exp\{-\frac{1}{2t}(d_{X+\sqrt{-1}Y}(X^{'}+\sqrt{-1}Y^{'}))\}\eta\sim\int_{V^{'}}\exp\{-\frac{1}{2t}(d_{X+\sqrt{-1}Y}(X^{'}+\sqrt{-1}Y^{'}))\}\eta.$$
Because
$$\int_{V^{'}}\exp\{-\frac{1}{2t}(d_{X+\sqrt{-1}Y}(X^{'}+\sqrt{-1}Y^{'}))\}\eta=\int_{V^{'}}\exp\{-\frac{1}{2t}(d_{X_{\mathcal{N}}+\sqrt{-1}Y_{\mathcal{N}}}(X_{\mathcal{N}}^{'}+\sqrt{-1}Y_{\mathcal{N}}^{'}))\}\eta$$
then
$$\int_{V^{'}}\exp\{-\frac{1}{2t}(d_{X+\sqrt{-1}Y}(X^{'}+\sqrt{-1}Y^{'}))\}\eta=$$
$$\int_{V^{'}}\exp\{\frac{1}{t}\langle(\mu^{\mathcal{N}}(X)+\sqrt{-1}\mu^{\mathcal{N}}(Y))\cdot,\cdot\rangle+\frac{1}{2t}\langle\mu^{\mathcal{N}}(X)y+\sqrt{-1}\mu^{\mathcal{N}}(Y)y,R^{\mathcal{N}}y\rangle\}\eta$$
$$+\int_{V^{'}}\exp\{-\frac{1}{2t}\langle-\mu^{\mathcal{N}}(X)y-\sqrt{-1}\mu^{\mathcal{N}}(Y)y, -\mu^{\mathcal{N}}(X)y-\sqrt{-1}\mu^{\mathcal{N}}(Y)y\rangle\}\eta$$
By making the change of variables $y=\sqrt{t}y$, we find that the
above formula is equal to
$$t^{n}\int_{V^{'}}\exp\{\frac{1}{t}\langle(\mu^{\mathcal{N}}(X)+\sqrt{-1}\mu^{\mathcal{N}}(Y))\cdot,\cdot\rangle+\frac{1}{2}\langle\mu^{\mathcal{N}}(X)y+\sqrt{-1}\mu^{\mathcal{N}}(Y)y,R^{\mathcal{N}}y\rangle\}\eta$$
$$+\int_{V^{'}}\exp\{-\frac{1}{2}\langle-\mu^{\mathcal{N}}(X)y-\sqrt{-1}\mu^{\mathcal{N}}(Y)y, -\mu^{\mathcal{N}}(X)y-\sqrt{-1}\mu^{\mathcal{N}}(Y)y\rangle\}\eta_{\sqrt{t}y}$$
we known that
$$\frac{(\frac{\langle(\mu^{\mathcal{N}}(X)+\sqrt{-1}\mu^{\mathcal{N}}(Y))\cdot,\cdot\rangle}{t})^{n}}{n!}=(\rm{Pf}(\mu^{\mathcal{N}}(X)+\sqrt{-1}\mu^{\mathcal{N}}(Y)))dy$$
here dy is the volume form of the submanifold $M_{0}$, let $n$ be the
dimension of $M_{0}$, then we get
$$=\int_{V^{'}}\exp\{\frac{1}{2}\langle\mu^{\mathcal{N}}(X)y+\sqrt{-1}\mu^{\mathcal{N}}(Y)y,R^{\mathcal{N}}y\rangle\}\eta[\det(\mu^{\mathcal{N}}(X)+\sqrt{-1}\mu^{\mathcal{N}}(Y))]^{\frac{1}{2}}dy_{1}\wedge...\wedge dy_{n}$$
$$+\int_{V^{'}}\exp\{-\frac{1}{2}\langle-\mu^{\mathcal{N}}(X)y-\sqrt{-1}\mu^{\mathcal{N}}(Y)y, -\mu^{\mathcal{N}}(X)y-\sqrt{-1}\mu^{\mathcal{N}}(Y)y\rangle\}\eta$$
Because by $[X_{M},Y_{M}]=0$ we have $[\mu^{TM}(X),\mu^{TM}(Y)]=0$.
And by $-\mu^{\mathcal{N}}(X)-\sqrt{-1}\mu^{\mathcal{N}}(Y)$,
$R^{\mathcal{N}}$ are skew-symmetric, so we get
$$=\int_{V^{'}}\exp\{-\frac{1}{2}\langle-\mu^{\mathcal{N}}(X)y-\sqrt{-1}\mu^{\mathcal{N}}(Y)y,-\mu^{\mathcal{N}}(X)y-\sqrt{-1}\mu^{\mathcal{N}}(Y)y+R^{\mathcal{N}}y\rangle\}dy_{1}\wedge...\wedge dy_{n}$$
$$\cdot[\det(\mu^{\mathcal{N}}(X)+\sqrt{-1}\mu^{\mathcal{N}}(Y))]^{\frac{1}{2}}\eta$$
$$=\int_{M_{0}}(2\pi)^{n}[\det(\mu^{\mathcal{N}}(X)+\sqrt{-1}\mu^{\mathcal{N}}(Y))]^{-\frac{1}{2}}[\det(-\mu^{\mathcal{N}}(X)-\sqrt{-1}\mu^{\mathcal{N}}(Y)+R^{\mathcal{N}})]^{-\frac{1}{2}}$$
$$\cdot[\det(\mu^{\mathcal{N}}(X)+\sqrt{-1}\mu^{\mathcal{N}}(Y))]^{\frac{1}{2}}\eta$$
$$=\int_{M_{0}}(2\pi)^{n}[\det(-\mu^{\mathcal{N}}(X)-\sqrt{-1}\mu^{\mathcal{N}}(Y)+R^{\mathcal{N}})]^{-\frac{1}{2}}\eta$$
$$=\int_{M_{0}} \frac{\eta}{\rm{Pf}[\frac{-\mu^{\mathcal{N}}(X)-\sqrt{-1}\mu^{\mathcal{N}}(Y)+R^{\mathcal{N}}}{2\pi}]}$$
\end{proof}
By theorem 1.,we can get the localization formulas of Berline and Vergne(see [1] or [2]).
\begin{corollery}[N.Berline and M.Vergne]
Let $M$ be a smooth closed oriented manifold, $G$ be a compact Lie
group acting smoothly on $M$. For any $\eta\in H^{*}_{X}(M)$, the following identity
hold:
$$\int_{M}\eta=\int_{M_{0}} \frac{\eta}{\rm{Pf}[\frac{-\mu^{\mathcal{N}}(X)+R^{\mathcal{N}}}{2\pi}]}$$
\end{corollery}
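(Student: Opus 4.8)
The plan is to obtain this corollary as the special case $(X,Y)=(X,0)$ of Theorem 1. First I would verify that this pair meets every hypothesis of Theorem 1. Trivially $[X_{M},Y_{M}]=[X_{M},0]=0$; the local one-parameter transformation $\exp(-tY)=\mathrm{id}$ certainly belongs to $G_{0}$, while $\exp(-tX)\in G_{0}$ is part of the data; and $M_{0}$ is the required submanifold, as checked below. Moreover, with $Y=0$ the operator $d_{X+\sqrt{-1}Y}$ reduces to $d_{X}$, the invariance condition $(L_{X_{M}}+\sqrt{-1}L_{Y_{M}})\omega=0$ reduces to $L_{X_{M}}\omega=0$, and the complex $(\Omega^{*}_{X_{M}+\sqrt{-1}Y_{M}}(M),d_{X+\sqrt{-1}Y})$ reduces to $(\Omega^{*}_{X}(M)\otimes_{\mathbb{R}}\mathbb{C},d_{X})$; consequently $H^{*}_{X+\sqrt{-1}Y}(M)=H^{*}_{X}(M)\otimes_{\mathbb{R}}\mathbb{C}$, so any $\eta\in H^{*}_{X}(M)$ determines a class to which Theorem 1 applies.

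Next I would identify the geometric data. Using the identity $\langle X_{M}+\sqrt{-1}Y_{M},X_{M}+\sqrt{-1}Y_{M}\rangle=|X_{M}|^{2}-|Y_{M}|^{2}+2\sqrt{-1}\langle X_{M},Y_{M}\rangle$ recorded just before Theorem 1, the case $Y=0$ gives $\langle X_{M},X_{M}\rangle=|X_{M}|^{2}$, so that $M_{0}=\{x\in M\mid |X_{M}(x)|^{2}=0\}=\{x\in M\mid X_{M}(x)=0\}$, which is exactly the zero set of the Killing field $X_{M}$. This set is a closed totally geodesic submanifold, and its normal bundle $\mathcal{N}$, the skew-symmetric moment endomorphism $\mu^{\mathcal{N}}(X)$, and the curvature $R^{\mathcal{N}}$ are precisely the objects appearing in the classical Berline--Vergne formula. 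Since the moment endomorphism is linear in its Lie-algebra argument, $\mu^{\mathcal{N}}(Y)=\mu^{\mathcal{N}}(0)=0$.

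Finally I would substitute into the conclusion of Theorem 1. With $\mu^{\mathcal{N}}(Y)=0$, the denominator $\mathrm{Pf}\!\left[\frac{-\mu^{\mathcal{N}}(X)-\sqrt{-1}\mu^{\mathcal{N}}(Y)+R^{\mathcal{N}}}{2\pi}\right]$ collapses to $\mathrm{Pf}\!\left[\frac{-\mu^{\mathcal{N}}(X)+R^{\mathcal{N}}}{2\pi}\right]$, and Theorem 1 immediately yields $\int_{M}\eta=\int_{M_{0}}\eta\big/\mathrm{Pf}\!\left[\frac{-\mu^{\mathcal{N}}(X)+R^{\mathcal{N}}}{2\pi}\right]$, which is the asserted identity.

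Since this is a direct specialization of Theorem 1, there is no real obstacle, only a little bookkeeping: one should note that Theorem 1 is stated for a connected $M_{0}$, so if $\{X_{M}=0\}$ has several components one applies Theorem 1 to each component and sums; and the complexification in the first step does not affect the integrals, as $\eta$ is real to begin with. As an alternative to quoting Theorem 1 as a black box, one may simply rerun its proof with $Y=0$ throughout: in the normal-bundle Gaussian/stationary-phase computation the term $|Y_{\mathcal{N}}|^{2}$ and the cross term $\sqrt{-1}\langle X_{\mathcal{N}},Y_{\mathcal{N}}\rangle$ drop out, and the argument reduces word for word to that of Berline and Vergne.
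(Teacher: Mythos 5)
Your proposal is correct and follows essentially the same route as the paper: specialize Theorem 1 to $Y=0$, observe that $M_{0}$ becomes the zero set of $X_{M}$ and that the Pfaffian denominator loses the $\sqrt{-1}\,\mu^{\mathcal{N}}(Y)$ term. The only slight difference is that you take $\exp(-tX)\in G_{0}$ as given, whereas the paper derives it from the fact that $\exp(-tX)$ fixes each point of $M_{0}=\{X_{M}=0\}$ pointwise --- a one-line observation that your argument could absorb without change.
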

\begin{proof}
By theorem 1., we set $Y=0$, then we get the result.
\end{proof}

\section{Localization formulas for characteristic numbers}
Let $M$ be an even dimensional compact oriented manifold without boundary, $G$ be a compact Lie group
acting smoothly on $M$ and $\mathfrak{g}$
be its Lie algebra. Let $g^{TM}$ be a $G$-invariant Riemannian metric on $TM$, $\nabla^{TM}$ is the Levi-Civita connection associated to $g^{TM}$. Here $\nabla^{TM}$ is a $G$-invariant connection, we see that $[\nabla^{TM},L_{X_{M}}]=0$ for all $X\in\mathfrak{g}$.\par

The equivariant connection $\widetilde{\nabla}^{TM}$ is the operator on $\Omega^{*}(M,TM)$ corresponding to a $G$-invariant connection $\nabla^{TM}$ is defined by the formula
$$\widetilde{\nabla}^{TM}=\nabla^{TM}+i_{X_{M}+\sqrt{-1}Y_{M}}$$
here $X_{M} ,Y_{M}$ be the smooth vector field on $M$ corresponded to $X,Y\in\mathfrak{g}$.
\begin{lemma}
The operator $\widetilde{\nabla}^{TM}$ preserves the space $\Omega^{*}_{X_{M}+\sqrt{-1}Y_{M}}(M,TM)$ which is the space of smooth $(X_{M}+\sqrt{-1}Y_{M})$-invariant forms with values in $TM$.
\end{lemma}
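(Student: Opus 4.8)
The plan is to show that for $\omega\in\Omega^{*}_{X_{M}+\sqrt{-1}Y_{M}}(M,TM)$, the form $\widetilde{\nabla}^{TM}\omega$ again satisfies $(L_{X_{M}}+\sqrt{-1}L_{Y_{M}})\widetilde{\nabla}^{TM}\omega=0$. The natural route is to prove the operator identity
$$[\,L_{X_{M}}+\sqrt{-1}L_{Y_{M}},\ \widetilde{\nabla}^{TM}\,]=0$$
on $\Omega^{*}(M,TM)$; once this commutation holds, applying the left side to $\omega$ and using $(L_{X_{M}}+\sqrt{-1}L_{Y_{M}})\omega=0$ immediately gives $(L_{X_{M}}+\sqrt{-1}L_{Y_{M}})\widetilde{\nabla}^{TM}\omega=0$, which is the assertion. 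So the whole Lemma reduces to this bracket computation, split into its real and imaginary parts: it suffices to check $[L_{X_{M}},\widetilde{\nabla}^{TM}]=0$ and $[L_{Y_{M}},\widetilde{\nabla}^{TM}]=0$ separately (the $\sqrt{-1}$ is just a scalar).

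First I would expand $\widetilde{\nabla}^{TM}=\nabla^{TM}+i_{X_{M}}+\sqrt{-1}\,i_{Y_{M}}$ and compute $[L_{X_{M}},\widetilde{\nabla}^{TM}]$ term by term. The term $[L_{X_{M}},\nabla^{TM}]$ vanishes because $\nabla^{TM}$ is $G$-invariant — this is exactly the hypothesis $[\nabla^{TM},L_{X_{M}}]=0$ recalled in the text, and the same for $L_{Y_{M}}$ since $\exp(-tY)\in G$. For the interior-multiplication terms I would use the standard Cartan-type identity $[L_{X_{M}},i_{Z_{M}}]=i_{[X_{M},Z_{M}]}$ for Killing fields, which gives $[L_{X_{M}},i_{X_{M}}]=i_{[X_{M},X_{M}]}=0$ and $[L_{X_{M}},i_{Y_{M}}]=i_{[X_{M},Y_{M}]}$, and symmetrically $[L_{Y_{M}},i_{X_{M}}]=i_{[Y_{M},X_{M}]}$, $[L_{Y_{M}},i_{Y_{M}}]=0$. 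Collecting terms, $[L_{X_{M}}+\sqrt{-1}L_{Y_{M}},\widetilde{\nabla}^{TM}]$ reduces to $\sqrt{-1}\,i_{[X_{M},Y_{M}]}+\sqrt{-1}\,i_{[Y_{M},X_{M}]}=0$ by antisymmetry of the bracket — so in fact the commutation holds without even needing $[X_{M},Y_{M}]=0$, though invoking that hypothesis (consistent with Theorem 1) makes each cross-term vanish individually, which is cleaner to present.

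The main point to be careful about — the only place where something could go wrong — is the precise sign/grading convention in the Cartan identity $[L_{X},i_{Y}]=i_{[X,Y]}$ when acting on \emph{vector-bundle-valued} forms: I would note that $L_{X_{M}}$ on $\Omega^{*}(M,TM)$ is the tensor-product Lie derivative (acting on the form part and on the $TM$-coefficient via $\mu^{TM}(X)+\nabla^{TM}_{X_M}$), and that $i_{X_{M}}$ acts only on the form part, so the bracket identity applies on the form factor exactly as in the scalar case and the coefficient action passes through because it commutes with $i_{X_M}$. Once this bookkeeping is in place, the computation is routine and the Lemma follows. An alternative, essentially equivalent, presentation would be to verify the invariance directly: write $\widetilde{\nabla}^{TM}\omega$ in a local frame, apply $L_{X_M}+\sqrt{-1}L_{Y_M}$, and use the Leibniz rule together with the vanishing of $(L_{X_M}+\sqrt{-1}L_{Y_M})\omega$ and the $G$-invariance of the connection coefficients; but the operator-bracket argument is shorter and I would present that one.
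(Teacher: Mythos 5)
Your proposal is correct and follows essentially the same route as the paper: both reduce the lemma to the commutation $[L_{X_{M}}+\sqrt{-1}L_{Y_{M}},\nabla^{TM}+i_{X_{M}+\sqrt{-1}Y_{M}}]=0$ and then apply it to an invariant $\omega$. The paper simply asserts this commutation in one line, whereas you actually justify it (via $G$-invariance of $\nabla^{TM}$ and the Cartan identity $[L_{X},i_{Z}]=i_{[X,Z]}$, with the cross terms cancelling by antisymmetry of the bracket), so your version is if anything more complete.
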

\begin{proof}
Let $\omega\in\Omega^{*}_{X_{M}+\sqrt{-1}Y_{M}}(M)$, then we have
\begin{align*}
(L_{X_{M}}+\sqrt{-1}L_{Y_{M}})\widetilde{\nabla}^{TM}\omega
&=(L_{X_{M}}+\sqrt{-1}L_{Y_{M}})(\nabla^{TM}+i_{X_{M}+\sqrt{-1}Y_{M}})\omega\\
&=(\nabla^{TM}+i_{X_{M}+\sqrt{-1}Y_{M}})(L_{X_{M}}+\sqrt{-1}L_{Y_{M}})\omega\\
&=0
\end{align*}
So we get $\widetilde{\nabla}^{TM}\omega\in\Omega^{*}_{X_{M}+\sqrt{-1}Y_{M}}(M,TM)$.
\end{proof}
We will also denote the restriction of $\widetilde{\nabla}^{TM}$ to $\Omega^{*}_{X_{M}+\sqrt{-1}Y_{M}}(M,TM)$ by $\widetilde{\nabla}^{TM}$.

The equivariant curvature $\widetilde{R}^{TM}$ of the equivariant connection $\widetilde{\nabla}^{TM}$ is defined by the formula(see [1])
$$\widetilde{R}^{TM}=(\widetilde{\nabla}^{TM})^{2}-L_{X_{M}}-\sqrt{-1}L_{Y_{M}}$$
It is the element of $\Omega^{*}_{X_{M}+\sqrt{-1}Y_{M}}(M,End(TM))$. We see that
\begin{align*}
\widetilde{R}^{TM}
&=(\nabla^{TM}+i_{X_{M}+\sqrt{-1}Y_{M}})^{2}-L_{X_{M}}-\sqrt{-1}L_{Y_{M}}\\
&=R^{TM}+[\nabla^{TM},i_{X_{M}+\sqrt{-1}Y_{M}}]-L_{X_{M}}-\sqrt{-1}L_{Y_{M}}\\
&=R^{TM}-\mu^{TM}(X)-\sqrt{-1}\mu^{TM}(Y)
\end{align*}

\begin{lemma}
The equivariant curvature $\widetilde{R}^{TM}$ satisfies the equvariant Bianchi formula $$\widetilde{\nabla}^{TM}\widetilde{R}^{TM}=0$$
\end{lemma}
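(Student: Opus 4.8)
The strategy is to work entirely at the level of operators on the invariant complex $\Omega^{*}_{X_{M}+\sqrt{-1}Y_{M}}(M,TM)$ and to use the standard identification of the equivariant covariant exterior derivative of an $\mathrm{End}(TM)$-valued invariant form $\Phi$ with the supercommutator $[\widetilde{\nabla}^{TM},\Phi]$ of operators; since $\widetilde{R}^{TM}$ has even total degree this supercommutator is just the ordinary operator commutator. Thus the claim becomes $[\widetilde{\nabla}^{TM},\widetilde{R}^{TM}]=0$, and I would substitute into it the expression $\widetilde{R}^{TM}=(\widetilde{\nabla}^{TM})^{2}-L_{X_{M}}-\sqrt{-1}L_{Y_{M}}$ that was derived just above the statement, reducing everything to two elementary commutator computations.

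First I would note $[\widetilde{\nabla}^{TM},(\widetilde{\nabla}^{TM})^{2}]=0$, which holds simply because composition of operators is associative: $\widetilde{\nabla}^{TM}\circ(\widetilde{\nabla}^{TM})^{2}=(\widetilde{\nabla}^{TM})^{3}=(\widetilde{\nabla}^{TM})^{2}\circ\widetilde{\nabla}^{TM}$. Second, I would check that $\widetilde{\nabla}^{TM}$ commutes with $L_{X_{M}}+\sqrt{-1}L_{Y_{M}}$. Expanding $\widetilde{\nabla}^{TM}=\nabla^{TM}+i_{X_{M}+\sqrt{-1}Y_{M}}$, the connection part contributes $[L_{X_{M}}+\sqrt{-1}L_{Y_{M}},\nabla^{TM}]=0$ because $\nabla^{TM}$ is $G$-invariant, i.e. $[\nabla^{TM},L_{X_{M}}]=[\nabla^{TM},L_{Y_{M}}]=0$; the contraction part contributes
$$[L_{X_{M}}+\sqrt{-1}L_{Y_{M}},\;i_{X_{M}}+\sqrt{-1}i_{Y_{M}}]=[L_{X_{M}},i_{X_{M}}]-[L_{Y_{M}},i_{Y_{M}}]+\sqrt{-1}\bigl([L_{X_{M}},i_{Y_{M}}]+[L_{Y_{M}},i_{X_{M}}]\bigr),$$
and by the identity $[L_{U},i_{V}]=i_{[U,V]}$ the first two terms vanish while the last two equal $\sqrt{-1}\,i_{[X_{M},Y_{M}]}+\sqrt{-1}\,i_{[Y_{M},X_{M}]}=0$, so the whole bracket is zero (note that no hypothesis such as $[X_{M},Y_{M}]=0$ is actually needed here, the cross terms cancelling on their own).

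Putting the two computations together,
$$\widetilde{\nabla}^{TM}\widetilde{R}^{TM}=[\widetilde{\nabla}^{TM},(\widetilde{\nabla}^{TM})^{2}]-[\widetilde{\nabla}^{TM},\,L_{X_{M}}+\sqrt{-1}L_{Y_{M}}]=0,$$
which is the equivariant Bianchi formula. The only point requiring care is the very first one: justifying that $\widetilde{\nabla}^{TM}$ applied to the $\mathrm{End}(TM)$-valued form $\widetilde{R}^{TM}$ genuinely equals the operator commutator $[\widetilde{\nabla}^{TM},\widetilde{R}^{TM}]$, with the sign dictated by the odd parity of $\widetilde{\nabla}^{TM}$ and the even parity of $\widetilde{R}^{TM}$. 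This is the standard reformulation of the covariant exterior derivative on endomorphism-valued forms, transported to the equivariant setting, and its legitimacy on the invariant subcomplex follows from Lemma 6 together with the fact that $\mu^{TM}(X)$ and $\mu^{TM}(Y)$ preserve invariance. Once that identification is in place, the remainder is purely formal manipulation of commutators with no analysis involved.
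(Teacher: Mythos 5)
Your proposal is correct and follows essentially the same route as the paper: both substitute $\widetilde{R}^{TM}=(\widetilde{\nabla}^{TM})^{2}-L_{X_{M}}-\sqrt{-1}L_{Y_{M}}$ into the commutator $[\widetilde{\nabla}^{TM},\widetilde{R}^{TM}]$ and observe that $[\widetilde{\nabla}^{TM},(\widetilde{\nabla}^{TM})^{2}]=0$ and $[\widetilde{\nabla}^{TM},L_{X_{M}}+\sqrt{-1}L_{Y_{M}}]=0$. You merely supply the detail the paper leaves implicit, namely the verification via $[L_{U},i_{V}]=i_{[U,V]}$ that the cross terms cancel without assuming $[X_{M},Y_{M}]=0$.
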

\begin{proof}
Because
\begin{align*}
[\widetilde{\nabla}^{TM},\widetilde{R}^{TM}]
&=[\widetilde{\nabla}^{TM},(\widetilde{\nabla}^{TM})^{2}-L_{X_{M}}-\sqrt{-1}L_{Y_{M}}]\\
&=[\widetilde{\nabla}^{TM},(\widetilde{\nabla}^{TM})^{2}]+[\nabla^{TM}+i_{X_{M}+\sqrt{-1}Y_{M}},-L_{X_{M}}-\sqrt{-1}L_{Y_{M}}]\\
&=0
\end{align*}
\end{proof}

Now we to construct the equivariant characteristic forms by $\widetilde{R}^{TM}$. If $f(x)$ is a polynomial in the indeterminate $x$, then $f(\widetilde{R}^{TM})$ is an element of $\Omega^{*}_{X_{M}+\sqrt{-1}Y_{M}}(M,End(TM))$. We use the trace map
$${\rm Tr}: \Omega^{*}_{X_{M}+\sqrt{-1}Y_{M}}(M,End(TM))\rightarrow\Omega^{*}_{X_{M}+\sqrt{-1}Y_{M}}(M)$$
to obtain an element of $\Omega^{*}_{X_{M}+\sqrt{-1}Y_{M}}(M)$, which we call an equivariant characteristic form.
\begin{lemma}
The equivariant differential form ${\rm Tr}(f(\widetilde{R}^{TM}))$ is $d_{X_{M}+\sqrt{-1}Y_{M}}$-closed, and its equivariant cohomology class is independent of the choice of the G-invariant connection $\nabla^{TM}$.
\end{lemma}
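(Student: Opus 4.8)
The plan is to carry out the standard equivariant Chern--Weil and transgression argument, now relative to the differential $d_{X+\sqrt{-1}Y}=d+i_{X_{M}+\sqrt{-1}Y_{M}}$ and the equivariant connection $\widetilde{\nabla}^{TM}=\nabla^{TM}+i_{X_{M}+\sqrt{-1}Y_{M}}$. By linearity of the trace it is enough to treat monomials $f(x)=x^{k}$, so that $f(\widetilde{R}^{TM})=(\widetilde{R}^{TM})^{k}$, which belongs to $\Omega^{*}_{X_{M}+\sqrt{-1}Y_{M}}(M,\mathrm{End}(TM))$ because $\widetilde{R}^{TM}$ does and the class of invariant forms is stable under wedge and composition. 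The first step is the structural identity
$$d_{X+\sqrt{-1}Y}\,\mathrm{Tr}(A)=\mathrm{Tr}\bigl(\widetilde{\nabla}^{TM}A\bigr),\qquad A\in\Omega^{*}_{X_{M}+\sqrt{-1}Y_{M}}(M,\mathrm{End}(TM)),$$
where on the right $\widetilde{\nabla}^{TM}$ denotes the equivariant connection induced on $\mathrm{End}(TM)$, i.e. $\widetilde{\nabla}^{TM}A=[\widetilde{\nabla}^{TM},A]$ as operators on $\Omega^{*}(M,TM)$. This splits into $d\,\mathrm{Tr}(A)=\mathrm{Tr}(\nabla^{TM}A)$, the ordinary Chern--Weil identity, and $i_{X_{M}+\sqrt{-1}Y_{M}}\,\mathrm{Tr}(A)=\mathrm{Tr}(i_{X_{M}+\sqrt{-1}Y_{M}}A)$, which is immediate since contraction acts only on the form part. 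Granting this, the Leibniz rule gives $\widetilde{\nabla}^{TM}(\widetilde{R}^{TM})^{k}=\sum_{j}(\widetilde{R}^{TM})^{j}(\widetilde{\nabla}^{TM}\widetilde{R}^{TM})(\widetilde{R}^{TM})^{k-1-j}=0$ by the equivariant Bianchi formula of Lemma 6, so $d_{X+\sqrt{-1}Y}\,\mathrm{Tr}(f(\widetilde{R}^{TM}))=\mathrm{Tr}(\widetilde{\nabla}^{TM}f(\widetilde{R}^{TM}))=0$; hence $\mathrm{Tr}(f(\widetilde{R}^{TM}))$ is $d_{X+\sqrt{-1}Y}$-closed.

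For the independence statement, given two $G$-invariant connections $\nabla^{TM}_{0},\nabla^{TM}_{1}$ I would consider the affine path $\nabla^{TM}_{u}=(1-u)\nabla^{TM}_{0}+u\nabla^{TM}_{1}$, $u\in[0,1]$, which is again $G$-invariant, so that $\theta:=\nabla^{TM}_{1}-\nabla^{TM}_{0}\in\Omega^{1}(M,\mathrm{End}(TM))$ is globally defined and satisfies $(L_{X_{M}}+\sqrt{-1}L_{Y_{M}})\theta=0$. Writing $\widetilde{\nabla}^{TM}_{u}$ and $\widetilde{R}^{TM}_{u}$ for the associated equivariant connection and curvature, a direct computation (exactly as in the non-equivariant case, the interior-multiplication term being $u$-independent) gives the transgression formula $\frac{d}{du}\widetilde{R}^{TM}_{u}=\widetilde{\nabla}^{TM}_{u}\theta$. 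Using the cyclic invariance of the trace one then gets $\frac{d}{du}\mathrm{Tr}(f(\widetilde{R}^{TM}_{u}))=\mathrm{Tr}(f'(\widetilde{R}^{TM}_{u})\widetilde{\nabla}^{TM}_{u}\theta)$, and by Lemma 6 together with the Leibniz rule this equals $\mathrm{Tr}(\widetilde{\nabla}^{TM}_{u}(f'(\widetilde{R}^{TM}_{u})\theta))=d_{X+\sqrt{-1}Y}\,\mathrm{Tr}(f'(\widetilde{R}^{TM}_{u})\theta)$ by the structural identity above. Integrating in $u$ yields
$$\mathrm{Tr}\bigl(f(\widetilde{R}^{TM}_{1})\bigr)-\mathrm{Tr}\bigl(f(\widetilde{R}^{TM}_{0})\bigr)=d_{X+\sqrt{-1}Y}\int_{0}^{1}\mathrm{Tr}\bigl(f'(\widetilde{R}^{TM}_{u})\theta\bigr)\,du,$$
and since the integrand lies in $\Omega^{*}_{X_{M}+\sqrt{-1}Y_{M}}(M)$ the right-hand side is $d_{X+\sqrt{-1}Y}$-exact there, so the two equivariant cohomology classes agree.

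I expect the main obstacle to be the careful sign bookkeeping in the two structural identities $d_{X+\sqrt{-1}Y}\,\mathrm{Tr}(A)=\mathrm{Tr}(\widetilde{\nabla}^{TM}A)$ and $\frac{d}{du}\widetilde{R}^{TM}_{u}=\widetilde{\nabla}^{TM}_{u}\theta$: the term $i_{X_{M}+\sqrt{-1}Y_{M}}$ lowers form degree by one, so it must be treated as an odd operator and combined with $\nabla^{TM}$ and the Leibniz rule consistently, and one must also check at each step that all the forms produced remain $(X_{M}+\sqrt{-1}Y_{M})$-invariant — which, as in Lemma 5, follows from the $G$-invariance of $\nabla^{TM}_{u}$ and of $\theta$. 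Once these two identities are in place, the rest is the formal transcription of the classical argument.
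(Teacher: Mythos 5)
Your proposal is correct and follows essentially the same route as the paper: the key identity $d_{X+\sqrt{-1}Y}\,\mathrm{Tr}(A)=\mathrm{Tr}([\widetilde{\nabla}^{TM},A])$ (which the paper obtains by writing $\nabla^{TM}=d+\omega$ locally and using cyclicity of the trace), closedness via the equivariant Bianchi identity of Lemma 6, and the transgression formula $\frac{d}{dt}\mathrm{Tr}(f(\widetilde{R}^{TM}_{t}))=d_{X+\sqrt{-1}Y}\,\mathrm{Tr}(\frac{d\widetilde{\nabla}^{TM}_{t}}{dt}f'(\widetilde{R}^{TM}_{t}))$ integrated over the family of connections. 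The only differences (reduction to monomials, an explicitly affine path) are cosmetic.
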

\begin{proof}
If $\alpha\in\Omega^{*}_{X_{M}+\sqrt{-1}Y_{M}}(M,End(TM))$, because in local $\nabla^{TM}=d+\omega$, we have
\begin{align*}
d_{X_{M}+\sqrt{-1}Y_{M}}{\rm Tr}(\alpha)
&={\rm Tr}(d_{X_{M}+\sqrt{-1}Y_{M}}\alpha)\\
&={\rm Tr}([d_{X_{M}+\sqrt{-1}Y_{M}},\alpha])+{\rm Tr}([\omega,\alpha])\\
&={\rm Tr}([\widetilde{\nabla}^{TM},\alpha])
\end{align*}
then by the equivariant Bianchi identity $\widetilde{\nabla}^{TM}\widetilde{R}^{TM}=0$, we get
$$d_{X_{M}+\sqrt{-1}Y_{M}}{\rm Tr}(f(\widetilde{R}^{TM}))=0.$$

Let $\nabla^{TM}_{t}$ is a one-parameter family of G-invariant connections with equivariant curvature $\widetilde{R}^{TM}_{t}$. We have
\begin{align*}
\frac{d}{dt}{\rm Tr}(f(\widetilde{R}^{TM}_{t}))
&={\rm Tr}(\frac{d\widetilde{R}^{TM}_{t}}{dt}f^{'}(\widetilde{R}^{TM}_{t}))\\
&={\rm Tr}(\frac{d(\widetilde{\nabla}^{TM}_{t})^{2}}{dt}f^{'}(\widetilde{R}^{TM}_{t}))\\
&={\rm Tr}([\widetilde{\nabla}^{TM}_{t},\frac{d\widetilde{\nabla}^{TM}_{t}}{dt}]f^{'}(\widetilde{R}^{TM}_{t}))\\
&={\rm Tr}([\widetilde{\nabla}^{TM}_{t},\frac{d\widetilde{\nabla}^{TM}_{t}}{dt}f^{'}(\widetilde{R}^{TM}_{t})])\\
&=d_{X_{M}+\sqrt{-1}Y_{M}}{\rm Tr}(\frac{d\widetilde{\nabla}^{TM}_{t}}{dt}f^{'}(\widetilde{R}^{TM}_{t}))
\end{align*}
from which we get
$${\rm Tr}(f(\widetilde{R}^{TM}_{1}))-{\rm Tr}(f(\widetilde{R}^{TM}_{0}))=d_{X_{M}+\sqrt{-1}Y_{M}}\int^{1}_{0}{\rm Tr}(\frac{d\widetilde{\nabla}^{TM}_{t}}{dt}f^{'}(\widetilde{R}^{TM}_{t}))dt$$
so we get the result.
\end{proof}

As an application of Theorem 1., we can get the following localization formulas for characteristic numbers
\begin{theorem}
Let $M$ be an $2l$-dim compact oriented manifold without boundary, $G$ be a compact Lie group
acting smoothly on $M$ and $\mathfrak{g}$ be its Lie algebra. Let $X,Y\in\mathfrak{g}$, and $X_{M} ,Y_{M}$ be the corresponding
smooth vector field on $M$. $M_{0}$ is the submanifold descriped in section 2. If $f(x)$ is a polynomial, then we have
$$\int_{M}{\rm Tr}(f(\widetilde{R}^{TM}))=\int_{M_{0}}\frac{{\rm Tr}(f(\widetilde{R}^{TM}))}{\rm{Pf}[\frac{-\mu^{\mathcal{N}}(X)-\sqrt{-1}\mu^{\mathcal{N}}(Y)+R^{\mathcal{N}}}{2\pi}]}$$
\end{theorem}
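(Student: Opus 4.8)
The plan is to obtain this as an immediate consequence of Theorem 1, by feeding in the equivariant characteristic form itself as the class $\eta$. First I would recall, from the lemma asserting that ${\rm Tr}(f(\widetilde{R}^{TM}))$ is $d_{X_{M}+\sqrt{-1}Y_{M}}$-closed, that the form
$$\eta:={\rm Tr}(f(\widetilde{R}^{TM}))\in\Omega^{*}_{X_{M}+\sqrt{-1}Y_{M}}(M)$$
satisfies $d_{X+\sqrt{-1}Y}\eta=0$ and therefore represents a class in $H^{*}_{X+\sqrt{-1}Y}(M)$. This is precisely the input hypothesis required by Theorem 1, so no additional work is needed to verify that $\eta$ is an admissible integrand.

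Next, under the standing assumptions carried over from Section 2 and from Theorem 1 --- namely $[X_{M},Y_{M}]=0$, that $G_{0}$ is the Lie subgroup of $G$ preserving $M_{0}$, and that the local one-parameter groups $\exp(-tX)$, $\exp(-tY)$ lie in $G_{0}$ --- I would apply Theorem 1 verbatim to $\eta={\rm Tr}(f(\widetilde{R}^{TM}))$. This yields
$$\int_{M}{\rm Tr}(f(\widetilde{R}^{TM}))=\int_{M_{0}}\frac{{\rm Tr}(f(\widetilde{R}^{TM}))}{{\rm Pf}[\frac{-\mu^{\mathcal{N}}(X)-\sqrt{-1}\mu^{\mathcal{N}}(Y)+R^{\mathcal{N}}}{2\pi}]},$$
where on the right-hand side ${\rm Tr}(f(\widetilde{R}^{TM}))$ is understood as the pullback of the form along the inclusion $M_{0}\hookrightarrow M$. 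To make the restriction transparent, I would invoke the connection-independence part of the same lemma: since the equivariant cohomology class of ${\rm Tr}(f(\widetilde{R}^{TM}))$ does not depend on the choice of $G$-invariant connection, one may pick $\nabla^{TM}$ near $M_{0}$ adapted to the orthogonal splitting $TM|_{M_{0}}=TM_{0}\oplus\mathcal{N}$, so that the restriction of $\mu^{TM}$ to $\mathcal{N}$ is $\mu^{\mathcal{N}}$ and the restriction of $R^{TM}$ to $\mathcal{N}$ is $R^{\mathcal{N}}$; with this choice numerator and denominator are written in mutually consistent terms.

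I do not expect a genuine obstacle here: the argument is essentially a citation of Theorem 1. The only points that deserve care are bookkeeping ones --- confirming that the closedness established in the preceding lemma exactly matches the hypothesis $\eta\in H^{*}_{X+\sqrt{-1}Y}(M)$ of Theorem 1, and noting that the localization/convergence step in the proof of Theorem 1 near $M_{0}$ is insensitive to which $d_{X+\sqrt{-1}Y}$-closed form $\eta$ is used, since that step relied only on $d_{X+\sqrt{-1}Y}$-closedness together with the non-vanishing of $\langle X_{M}+\sqrt{-1}Y_{M},\,X_{M}+\sqrt{-1}Y_{M}\rangle$ off $M_{0}$. Hence the main (and only) thing to spell out explicitly is which of the hypotheses of Theorem 1 are in force, after which the identity follows.
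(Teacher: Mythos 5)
Your proposal is correct and matches the paper's own argument: the paper likewise observes that ${\rm Tr}(f(\widetilde{R}^{TM}))$ is $d_{X_{M}+\sqrt{-1}Y_{M}}$-closed by the preceding lemma and then applies Theorem 1 directly. The extra remarks you add about restricting the connection near $M_{0}$ are reasonable bookkeeping but not part of (or needed beyond) the paper's two-line proof.
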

\begin{proof}
By Lemma 8., we have ${\rm Tr}(f(\widetilde{R}^{TM}))$ is $d_{X_{M}+\sqrt{-1}Y_{M}}$-closed. And by Theorem 1., we get the result.
\end{proof}
We can use this formula to compute these characteristic numbers of $M$, especially we can use it to Euler characteristic of $M$. Here we didn't to give the details.

\end{document}